\newtheorem{theorem}{\color{black}\indent Theorem}[section]
\newtheorem{lemma}{\color{black}\indent Lemma}[section]
\newtheorem{remark}{\color{black}\indent Remark}[section]
\begin{document}
\large
\title{Lifespan of Solutions to Wave Equations on de Sitter Spacetime}
\author{Weiping Yan
{\sc}
\thanks{
School of Mathematics, Xiamen University, Xiamen,
361000, P.R. China.~~~ Email address: yan8441@126.com;yanwp@xmu.edu.cn}}
\date{}

\maketitle
%\begin{abstract}
\par
{\bf Abstract.}
In this paper, we consider the finite time blow up of solutions for the following two kinds of nonlinear wave equations on de Sitter spacetime
\begin{eqnarray*}
&&\square_g=F(u),\\
&&\square_g=F(\partial_tu,\nabla u).
\end{eqnarray*}
This proof is based on a new blow up criterion, which generalize the blow up criterion in Sideris \cite{Sider}. Furthermore, we give the lifespan estimate of solutions for the problems.
\par
\vskip 5mm
{\bf Key Words.}
Blow up; Wave equation; Lifespan

%\par
%{\bf AMS(MOS)}. Mathematics Subject Classification(2000):
%\end{abstract}
%\vskip 5mm
\section{Introduction and Main result}\setcounter{equation}{0}
The general purpose of this paper is to initiate the study of the finite time blow up of solutions for the following nonlinear wave equation on de Sitter Spacetime:
\begin{eqnarray}\label{E1-1}
\square_g=F(u,\partial_tu,\nabla u),
\end{eqnarray}
where $\square_g=\frac{1}{\sqrt{|g|}}\frac{\partial}{\partial x^i}(\sqrt{|g|}g^{ik}\frac{\partial }{\partial x^k})$ is the d'Alembertian in the de Sitter metric.

In the models proposed by Einstein \cite{E} and de Sitter \cite{Des} the universe is assumed
to be spatially isotropic, as well as a static system. This means that one can choose a suitable system of coordinates
in which the line element has the static and
spherically symmetric form
\begin{eqnarray*}
ds^2=-(1-\frac{M_{bh}}{r}-\frac{\Lambda r^2}{3})c^2dt^2+(1-\frac{2M_{bh}}{r}-\frac{\Lambda r^2}{3})^{-1}dr^2
+r^2(d\theta^2+\sin^2\theta d\phi^2),
\end{eqnarray*}
where $M_{bh}$ has a meaning of the ``mass of the black hole'', $\Lambda$ is the cosmological constant.
Even a small value of $\Lambda$ could have drastic effects on the evolution of the universe.
The corresponding metric with this line element is called the Schwarzschild-de Sitter metric, which leads to a solution of the Einstein equation with the cosmological constant
\begin{eqnarray*}
R_{\mu\nu}-\frac{1}{2}g_{\mu\nu}R=-8\pi GT_{\mu\nu}-\Lambda g_{\mu\nu}.
\end{eqnarray*}
In particularly, we ignore the influence of the black hole, $i.e.$ $M_{bh}=0$, the line element in the de Sitter spacetime has the form
\begin{eqnarray*}
ds^2=-(1-\frac{r^2}{R^2})c^2dt^2+(1-\frac{r^2}{R^2})^{-1}dr^2
+r^2(d\theta^2+\sin^2\theta d\phi^2).
\end{eqnarray*}
The Lama$\hat{i}$tre-Robertson transformation \cite{Mo} leads to the following form for the line
element
\begin{eqnarray*}
ds^2=-c^2dt'^2+e^{2ct'R^{-1}}(dx'^2+dy'^2+dz'^2),
\end{eqnarray*}
where $R$ is the ``radius'' of the universe. The new coordinates $x'$, $y'$, $z'$ and $t'$ can take all values from $-\infty$ to $+\infty$.
Furthermore, in the Robertson-Walker spacetime \cite{Hw}, one can choose system of coordinates in which the metric has the form
\begin{eqnarray*}
ds^2=-dt^2+S^2(t)(dx^2+dy^2+dz^2),
\end{eqnarray*}
with an appropriate scale factor $S(t)$. In particular, the metric in de Sitter spacetime corresponds to the cosmic scale factor $S(t)=S(0)e^{\sqrt{\frac{\Lambda}{3}}t}$, where cosmological constant $\Lambda$ is a positive constant, which produces models with an exponentially accelerated expansion.

The Cauchy problem of nonlinear wave equations on de Sitter spacetime corresponds to nonlinear wave equations with time-dependent coefficients.
Colombini and Spagnolo \cite{Colom} studied the following Cauchy problem of one dimensional wave equation with time-dependent coefficients
\begin{eqnarray}\label{El}
u_{tt}-a(t)u_{xx}=0,~~(t,x)\in[0,T]\times\textbf{R},
\end{eqnarray}
where $a(t)$ is a nonnegative smooth function, which oscillates an infinite number of times. By imposing some assumptions, they showed that (\ref{El})
is not well-posed in $\mathbb{C}^{\infty}$. After that, Colombini, De Giorgi and Spagnolo \cite{Colom1} showed that
any Cauchy problem of (\ref{El}) is well-posed in the space of periodic real analytic functions.
In particularly, let $a(t)=e^{-2Ht}$, the linear wave equation with time-dependent coefficients
\begin{eqnarray*}\label{El1}
u_{tt}-e^{-2Ht}\triangle u=0
\end{eqnarray*}
is strictly hyperbolic. The speed of propagation is equal to $e^{-Ht},~~\forall t\in\mathbb{R}$.
The resolving operator for the corresponding Cauchy problem can be rewritten as a sum of the Fourier integral operators with the amplitudes given in terms of the Bessel functions and in terms of confluent hypergeometric functions. We refer to \cite{Ba,Ch,Yag1,Yag2,Yag3,Yag4} for more results of the semilinear Klein-Gordon equation on de Sitter spacetime.
Recently, Galstian and Yagdjian \cite{Yag5} obtained that the global existence for the semilinear Klein-Gordon equation in FLRW spacetime.

The Strauss' conjecture concerns with the existence or nonexistence of global solutions to
the semilinear wave equation
\begin{eqnarray}\label{E1-00}
\partial_{tt}u-\sum_{i=1}^n\partial_i^2u=|u|^p,~~(t,x)\in\mathbb{R}^+\times\mathbb{R}^n,~n\geq2.
\end{eqnarray}
Above problem was firstly studied by John \cite{John}
with $n=3$. More precisely, he showed that the semilinear wave equation (\ref{E1-00}) with small initial data has global solutions for the exponent $p>1+\sqrt{2}$. Meanwhile, he proved that the finite time blow up of solutions for the exponent $p<1+\sqrt{2}$, where the initial data is not zero. Strauss conjectured that the existence or nonexistence of global solutions to equation (\ref{E1-00}) with spatial dimension $n\geq2$ for the exponent $p\in(p_c(n),\infty)$ or $p\in(1,p_c(n)]$, where $p_c(n)$ is the positive root of the quadratic equation
\begin{eqnarray*}
(n-1)p^2-(n+1)p-2=0.
\end{eqnarray*}
After that, there are many results concerning with this conjecture. We give a brief summary here.
For the global existence of solutions of (\ref{E1-1}), we refer to Glassey \cite{Glassey1} for $n=2$,
Lindblad and Sogge \cite{Lin2} for $n\leq8$,
and Sideris \cite{Sider} for $n\geq4$. Georgiev, Lindblad, Sogge \cite{Geo} for $n\geq4$ and $p_c<p\leq\frac{n+3}{n-1}$. For the finite time blow up of solutions of (\ref{E1-1}), one can see Glassey \cite{Glassey2} for $n=2$ and Sideris \cite{Sider} for $n\geq4$, Schaeffer \cite{Schae} for $n=2,3$ and the critical case $p=p_c(n)$, Yordanov and Zhang \cite{Yorda2} and Zhou \cite{zhou0} for $n\geq4$, Takamura and Wakasa \cite{Hiro} and Zhou and Han \cite{zhou2} for $n\geq2$, where the sharp upper bound of the lifespan of solutions by using different method. The lifespan $T(\epsilon)$ of solutions to (\ref{E1-00}) is the largest value such that solutions exist for
$(t,x)\in(0,T(\epsilon))\times\mathbb{R}^n$.
To the best knowledge of authors, there is few result concerning with the Strauss's conjecture on cosmological spacetime except the work of
Lindblad, Metcalfe, Sogge, Tohaneanu and Wang \cite{Lin1}. They proved that the global existence of solutions for the semilinear wave equation (\ref{E1-00}) on Kerr black hole backgrounds.

Let the potential function
\begin{equation}\label{rr1}
F(u,\partial_tu,\nabla u)=-|u|^p.
\end{equation}
In the de Sitter spacetime, the semilinear wave equation (\ref{E1-1}) can be rewritten explicitly in coordinates as
\begin{eqnarray}\label{E1-1R1}
u_{tt}-e^{-2Ht}\triangle u=e^{-\frac{n}{2}(p-1)Ht}|u|^p,
\end{eqnarray}
where $(t,x)\in(0,\infty)\times\mathbb{R}^n$, $p>1$, $\triangle$ is the Laplace operator on the flat metric and $H=\sqrt{\frac{\Lambda}{3}}$ is the Hubble constant.

In fact, the rigorous derivation of equation (\ref{E1-1R1}) is from the semilinear Klein-Gordon equation with mass $m=\frac{nH}{2}$. The equation (\ref{E1-1R1}) is the equation of graviton. One can see \cite{Yag2,Yag3} for more details.

Assume that the initial data with compactly supported
\begin{eqnarray}\label{E1-2}
u(0,x)=\epsilon f(x),~~u_t(0,x)=\epsilon g(x),~~x\in\mathbb{R}^n
\end{eqnarray}
satisfies
\begin{eqnarray}\label{E1-3}
f(x),~~g(x)\geq0,~~f(x)=g(x)=0~for~|x|>1,~~g(x)\not\equiv0,
\end{eqnarray}
where $\epsilon$ is a small postive constant, $f(x),g(x)\in\mathbb{C}_0^{\infty}(\mathbb{R}^n)$.

\begin{theorem}
Let $f,g$ be smooth functions with compact support $f,g\in\mathbb{C}_0^{\infty}$, and $F$ be the form of (\ref{rr1}).
Assume that (\ref{E1-2}), (\ref{E1-3}) and $0<H<2$ hold, and problem (\ref{E1-1}) has a solution $(u,u_t)\in\mathbb{C}([0,T),\mathbb{H}^1(\mathbb{R}^n)\times\mathbb{L}^{2}(\mathbb{R}^n))$ such that
\begin{eqnarray*}
\textbf{supp}(u,u_t)\subset\{(t,x):|x|\leq1+t\},~~n\geq2.
\end{eqnarray*}
If $1<p< p_c(n)$, then the solution $u(t,x)$ will blow up in finite time, that is $T<\infty$. Moreover, we have the following estimate for the lifespan $T(\epsilon)$ of solutions for (\ref{E1-1}) such that
\begin{eqnarray*}
T(\epsilon)\leq C_0\epsilon^{-\frac{p-1}{(p-1)[1-(n-1)\frac{p}{2}]+2}},
\end{eqnarray*}
where $2^{-\frac{(p-1)[1-(n-1)\frac{p}{2}]+2}{p-1}}\leq\epsilon\leq1$ and $C_0$ is a positive constant independent of $\epsilon$.
\end{theorem}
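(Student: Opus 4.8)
The plan is to test the equation against carefully chosen spatial and space--time weights, reduce the PDE to an ordinary differential inequality for a scalar functional, and then apply a de Sitter analogue of Sideris' ODE blow-up lemma.

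First I would introduce the averaged functional $F(t)=\int_{\mathbb{R}^n}u(t,x)\,dx$. Integrating \eqref{E1-1R1} over $\mathbb{R}^n$ and using that $u(t,\cdot)$ has compact support, so that $\int_{\mathbb{R}^n}\triangle u\,dx=0$ by the divergence theorem, the principal part drops out and one obtains $F''(t)=e^{-\frac{n}{2}(p-1)Ht}\int_{\mathbb{R}^n}|u|^p\,dx\ge0$. From hypotheses \eqref{E1-2}--\eqref{E1-3} we have $F(0)=\epsilon\int_{\mathbb{R}^n}f\,dx\ge0$ and $F'(0)=\epsilon\int_{\mathbb{R}^n}g\,dx>0$; since $F''\ge0$, $F'$ is nondecreasing, whence $F(t)\ge c\,\epsilon\,t$ with $c=\int_{\mathbb{R}^n}g\,dx>0$. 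This yields both positivity and a linear lower bound to seed the iteration.

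Next I would close the inequality in $F$ itself. Since $\mathrm{supp}\,u(t,\cdot)\subset\{|x|\le1+t\}$, Hölder's inequality on this ball of volume $\lesssim(1+t)^n$ gives $\int_{\mathbb{R}^n}|u|^p\,dx\ge\omega_n^{-(p-1)}(1+t)^{-n(p-1)}F(t)^p$, so that $F''(t)\ge C\,e^{-\frac{n}{2}(p-1)Ht}(1+t)^{-n(p-1)}F(t)^p$. This crude bound already forces blow-up below a first threshold; to reach the full subcritical range $p<p_c(n)$ I would instead test against a positive (sub)solution $\Psi(t,x)$ of the linear operator $\square_g\Psi=0$, built from the spatial profile $\phi(x)=\int_{S^{n-1}}e^{x\cdot\omega}\,d\omega$ and the Bessel-function time profile solving $\theta''+|\xi|^2e^{-2Ht}\theta=0$ (the linear equation being solvable in Bessel functions, as recalled in the Introduction). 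Working with $G(t)=\int_{\mathbb{R}^n}u\,\Psi\,dx$ and a weighted Hölder inequality replaces the crude volume power by the sharp one, producing an inequality $F''(t)\ge W(t)F(t)^p$ with weight $W(t)=C\,e^{-\delta t}(1+t)^{-q}$ whose exponents are governed by the quadratic $(n-1)p^2-(n+1)p-2$.

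The heart of the matter is the resulting generalized Sideris lemma: for nonnegative $F$ with $F(0)\ge0$, $F'(0)>0$ and $F''(t)\ge W(t)F(t)^p$, one feeds $F\ge c\epsilon t$ back through two integrations to obtain a sequence of polynomial lower bounds $F(t)\ge C_j(1+t)^{a_j}$ with $a_j\uparrow$; tracking the constants and optimizing yields the self-consistent blow-up condition $\epsilon^{p-1}\,T^{(p-1)[1-(n-1)\frac{p}{2}]+2}\gtrsim1$, i.e. $T(\epsilon)\le C_0\,\epsilon^{-(p-1)/((p-1)[1-(n-1)\frac{p}{2}]+2)}$, the denominator being positive exactly when $p<p_c(n)$. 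The main obstacle is that the cosmic-expansion factor works against blow-up: it makes $\int_0^\infty\sqrt{W}<\infty$, so unlike the flat case blow-up is not automatic but hinges on the data being large enough. This is precisely why one needs the lower restriction $\epsilon\ge2^{-\frac{(p-1)[1-(n-1)p/2]+2}{p-1}}$ together with $0<H<2$, which keep the exponential essentially frozen at its $O(1)$ value on the bounded lifespan window so that it enters only the constants and not the exponent. The delicate bookkeeping --- simultaneously freezing the exponential, extracting the sharp power of $(1+t)$ matching $p_c(n)$, and closing the self-consistency --- together with constructing the de Sitter Bessel test function with the right positivity and integrability (where $0<H<2$ enters), is where the real work lies.
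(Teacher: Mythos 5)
Your overall architecture matches the paper's: the functional $G(t)=\int_{\mathbb{R}^n}u\,dx$, integration of the equation to kill the principal part, a crude H\"older inequality on the light cone $|x|\le 1+t$ to produce the ordinary differential inequality $G''\ge W(t)G^p$ with an exponentially decaying weight (this is where $0<H<2$ is used, via $\sup_{a>0}a^be^{-a}=(be^{-1})^b$ to trade the power $(1+t)^{-n(p-1)}$ for part of the exponential), and a Yordanov--Zhang type test function $\phi(x)=\int_{S^{n-1}}e^{x\cdot\omega}d\omega$ to extract the sharp secondary lower bound $G(t)\ge K(1+t)^{(n-1)(1-p/2)+2}$ whose exponent encodes the Strauss quadratic. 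You also correctly identify the essential obstruction: since $\int_0^\infty\sqrt{W}<\infty$, blow-up is not unconditional and the lower restriction on $\epsilon$ is forced.

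The genuine gap is in your closing step. You propose the classical Sideris iteration, feeding $F\ge C_j(1+t)^{a_j}$ back through two integrations of $F''\ge W(t)F^p$ to get $a_j\uparrow$. With $W(t)=Ce^{-\delta t}(1+t)^{-q}$ this iteration stalls: $\int_0^t\int_0^s e^{-\delta\tau}\tau^{pa_j-q}\,d\tau\,ds$ is bounded by a constant times $t$ for \emph{every} exponent $pa_j-q$, so the sequence of lower bounds never improves beyond linear growth and no self-consistent blow-up condition emerges from "tracking the constants." This is exactly the failure mode that the paper's Lemma 2.1 is designed to circumvent: instead of iterating, it runs a one-shot contradiction. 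Assuming existence past time $2T_1$, the Riccati-type reduction $G'/G^{1+\delta}>(\tfrac{A}{p+1})^{1/2}e^{-\frac{b_1}{2}(t+R)}G^{\frac{p-1}{2}-\delta}$ combined with the polynomial lower bound $G\ge Kt^{a_1}$ forces, upon integration, the coefficient $K$ to fall below a fixed threshold $K_0$ determined by the (finite) integral $\int_{2T_1}^{\infty}e^{-\frac{b_1}{2}(s+R)}s^{a_1(\frac{p-1}{2}-\delta)}ds$ --- contradicting $K\ge K_0$; the condition $b_1-a_1(p-1)<2$, equivalent to $p<p_c(n)$, together with the scaling $\tau=t\epsilon^{(p-1)/((p-1)a_1-b_1+2)}$ then yields the lifespan bound. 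Without this (or an equivalent replacement for the iteration), your argument does not close, and your seed $F\ge c\epsilon t$ is in any case not how the exponent $a_1=(n-1)(1-p/2)+2$ is reached: the paper gets it directly from the uniform positivity of $\int u\psi_2\,dx$ plus the reverse H\"older estimate of Lemma 2.3, not from an iteration. Your proposed Bessel-function time profile for the test function is a plausible alternative to the paper's elementary exponential weight $\psi_2=e^{-(\frac{n}{2p}(p-1)H+1)t}\phi(x)$, but you would still need to verify its positivity and the analogue of the integral estimate in Lemma 2.3, which you do not carry out.
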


\begin{remark}
We notice that the semilinear Klein-Gordon equation in de Sitter spacetime admits the finite time blow up of solutions $u\in\mathbb{C}([0,T);\mathbb{L}^q(\mathbb{R}^n))$ with $q\in[2,\infty)$ and $p>1$, one can see \cite{Yag3} for more details. Comparing with semilinear wave equation in de Sitter spacetime ($i.e.$ Theorem 1.1), we find that the main difference of two finite time blow up results is the exponent $p\in (1,p_c(n))$, where $p_c(n)$ is the positive root of $(n-1)p^2-(n+1)p-2=0$. We think that this result also holds for the semilinear wave equation in FLRW spacetime, i.e. an analog of the Strauss conjecture. Here we do not intend to give this proof of details.
\end{remark}

The Glassey \cite{Glassey3} (or see \cite{Ramm} and \cite{Schae1}) made the conjecture that the exponent $p_c'=1+\frac{2}{n-1}$ is
the critical exponent for the global existence of
wave equation
\begin{eqnarray}\label{E1-001}
\partial_{tt}u-\sum_{i=1}^n\partial_i^2u=|\partial_tu|^p+|\nabla u|^p,~~(t,x)\in\mathbb{R}^+\times\mathbb{R}^n,~n\geq2.
\end{eqnarray}
Glassey's conjecture was verified by Sideris \cite{Sider1} for $n=3$ with radial data. 
Hidano and Tsutaya \cite{Hi} or Tzvetkov \cite{Tz} verified this for $n=2,3$ with general data.
After that, Hidano, Wang and Yokovama \cite{Hi1} proved this for $n\geq2$ with radially symmetric data. Recently, Wang \cite{wang} obtained this for $n=3$ on asymptotically flat manifolds.
For the finite time blow up of solutions to (\ref{E1-001}), there is only a result of Zhou \cite{zhou4} for $n\geq4$, meanwhile, he gave an explicit upper bound to the lifespan of solutions.

The second aim of this paper is to study an analog of the Glassey conjecture on de Sitter spacetime.
Let the potential function
\begin{equation}\label{Er1}
F(u,\partial_tu,\nabla u)=-(|\partial_tu|^p+|\nabla u|^p).
\end{equation}
Then in the de Sitter spacetime,
the semilinear wave equation (\ref{E1-001}) can be rewritten in coordinates as
\begin{eqnarray}\label{E1-002}
u_{tt}-e^{-2Ht}\triangle u=e^{-\frac{n}{2}(p-1)Ht}(|\partial_tu|^p+|\nabla u|^p).
\end{eqnarray}

Now we establish the following blow up result for (\ref{E1-002}). 

\begin{theorem}
Let $f,g$ be smooth functions with compact support $f,g\in\mathbb{C}_0^{\infty}$, and $F$ be the form of (\ref{Er1}).
Assume that (\ref{E1-2}), (\ref{E1-3}) and $H>0$ hold, and problem (\ref{E1-002}) has a solution $(u,u_t)\in\mathbb{C}([0,T),\mathbb{H}^1(\mathbb{R}^n)\times\mathbb{L}^{2}(\mathbb{R}^n))$ such that
\begin{eqnarray*}
\textbf{supp}(u,u_t)\subset\{(t,x):|x|\leq1+t\},~~n\geq2.
\end{eqnarray*}
If $1<p< p_c'(n)=1+\frac{2}{n-1}$, then the solution $u(t,x)$ will blow up in finite time, that is $T<\infty$. Moreover, we have the following estimate for the lifespan $T(\epsilon)$ of solutions for (\ref{E1-002}) such that
\begin{eqnarray*}
T(\epsilon)\leq \epsilon^{-(p-1)},
\end{eqnarray*}
where $2^{-\frac{1}{p-1}}\leq\epsilon\leq1$ and $C_0$ is a positive constant independent of $\epsilon$.
\end{theorem}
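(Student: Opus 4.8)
The plan is to follow the averaging (test-function) philosophy behind the Sideris criterion, adapted to the decaying coefficients coming from the de Sitter structure, and to reduce the blow-up to a first-order nonlinear ordinary differential inequality for a suitable spatial average of $u_t$. First I would introduce the functional
\[
G(t)=\int_{\mathbb{R}^n}u_t(t,x)\,dx .
\]
Integrating (\ref{E1-002}) over $\mathbb{R}^n$ and using that $u$ is supported in the light cone $\{|x|\le 1+t\}$, the divergence theorem kills the Laplacian term, $e^{-2Ht}\int_{\mathbb{R}^n}\triangle u\,dx=0$, so that
\[
G'(t)=e^{-\frac{n}{2}(p-1)Ht}\int_{\mathbb{R}^n}\bigl(|u_t|^p+|\nabla u|^p\bigr)\,dx\ \ge\ e^{-\frac{n}{2}(p-1)Ht}\int_{\mathbb{R}^n}|u_t|^p\,dx .
\]
Since the right-hand side is nonnegative, $G$ is nondecreasing, and by (\ref{E1-3}) we have $G(0)=\epsilon\int_{\mathbb{R}^n}g\,dx>0$; hence $G(t)\ge G(0)>0$ on the existence interval, which legitimizes the H\"older step below.

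Next I would close the inequality. Applying H\"older (equivalently Jensen) on the ball $|x|\le 1+t$, whose volume is comparable to $(1+t)^n$, gives
\[
\int_{\mathbb{R}^n}|u_t|^p\,dx\ \ge\ \bigl(\omega_n(1+t)^n\bigr)^{1-p}\Bigl(\int_{\mathbb{R}^n}u_t\,dx\Bigr)^{p}=c\,(1+t)^{-n(p-1)}G(t)^p .
\]
Combining, $G$ satisfies the differential inequality
\[
G'(t)\ \ge\ c_0\,e^{-\frac{n}{2}(p-1)Ht}(1+t)^{-n(p-1)}\,G(t)^p ,
\]
which is precisely the generalized Sideris-type blow-up criterion, now carrying a time-dependent, exponentially decaying coefficient. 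The hypothesis $2^{-1/(p-1)}\le\epsilon\le1$ is exactly what confines the argument to a bounded time window, since it forces the candidate lifespan $\epsilon^{-(p-1)}\le 2$. On $[0,2]$ the coefficient is bounded below by a positive constant $c_1=c_1(n,p,H)$, so the inequality degenerates to $G'\ge c_1 G^p$; integrating from $0$ and using $G(0)=\epsilon\int g$ yields $G(t)^{-(p-1)}\le G(0)^{-(p-1)}-c_1(p-1)t$, whence $G$ must become infinite before $T^{\ast}=(c_1(p-1))^{-1}G(0)^{-(p-1)}=C_0\,\epsilon^{-(p-1)}$. This yields both the finiteness of $T$ and the lifespan bound $T(\epsilon)\le C_0\epsilon^{-(p-1)}$.

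The main obstacle is the decaying coefficient $e^{-\frac{n}{2}(p-1)Ht}$, which acts against blow-up and is the reason the conclusion is asserted only on the bounded range of $\epsilon$: the bounded time window is precisely where this coefficient is uniformly positive. Capturing the sharp Glassey exponent $p<p_c'(n)=1+\frac{2}{n-1}$, rather than merely some $p>1$, is where the flat average must be upgraded to a weighted one. I would replace $dx$ by $\Psi(t,x)\,dx$, where $\Psi=T(t)\phi_1(x)$ is a positive solution of the linear de Sitter operator $\partial_t^2-e^{-2Ht}\triangle$, built from the eigenfunction $\triangle\phi_1=\phi_1$ (so $\phi_1(x)\sim|x|^{-(n-1)/2}e^{|x|}$) and a Bessel-type temporal profile $T(t)$ solving $T''=e^{-2Ht}T$. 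Running the same H\"older argument against $\Psi$ then replaces the cone volume $(1+t)^n$ by $\int_{|x|\le1+t}\Psi\,dx$, and the balance between the exponential growth of $\phi_1$ across the light cone and the decay of $T$ is exactly what selects the threshold $(n-1)(p-1)<2$. Controlling this weighted integral, together with the lower-order terms produced when differentiating $\int u_t\Psi\,dx$, is the step I expect to demand the most care.
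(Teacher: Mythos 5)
There is a genuine gap in your blow-up mechanism. Your reduction yields only the first-order inequality
\begin{equation*}
G'(t)\ \ge\ c_0\,e^{-\frac{n}{2}(p-1)Ht}(1+t)^{-n(p-1)}G(t)^p,\qquad G(0)=\epsilon\int_{\mathbb{R}^n} g\,dx,
\end{equation*}
and separation of variables shows that such an inequality forces blow-up by time $T$ only if $\int_0^{T}c_0e^{-\frac{n}{2}(p-1)Hs}(1+s)^{-n(p-1)}ds>\frac{G(0)^{1-p}}{p-1}$. Since $H>0$, the coefficient is integrable on $[0,\infty)$, so this can fail for \emph{all} $T$ unless $G(0)$ exceeds a threshold determined by $c_0,n,p,H$; nothing in the hypotheses guarantees that $\epsilon\int g$ (with $\epsilon\le 1$ and $g$ arbitrary admissible data) clears that threshold. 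Your window argument does not repair this: the hypothesis $2^{-1/(p-1)}\le\epsilon$ shows that the \emph{target} bound $\epsilon^{-(p-1)}$ is $\le 2$, but the blow-up time of the reduced ODE $G'\ge c_1G^p$ is $T^{*}=(c_1(p-1))^{-1}G(0)^{-(p-1)}=C_0\epsilon^{-(p-1)}$ with $C_0$ containing the inverse of the small constant $c_1$, so there is no reason that $T^{*}\le 2$; if $T^{*}>2$ your inequality only holds on $[0,2]$ and nothing blows up there, while enlarging the window to $[0,T_0]$ shrinks $c_1(T_0)$ exponentially and pushes $T^{*}$ out faster than $T_0$ grows, so the self-consistency can never be achieved. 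Finally, your main argument never uses $p<1+\frac{2}{n-1}$; the weighted-functional upgrade you sketch at the end is where it would have to enter, but it is not carried out.

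The paper's route is structurally different precisely to avoid this. It sets $G(t)=\int u\,dx$ (not $\int u_t\,dx$) and extracts two separate pieces of information: from the $|u_t|^p$ term, the inequality $G''\ge C'e^{-n(p-1)Ht}|G'|^p$, which is integrated as a Bernoulli equation in $G'$ to produce a \emph{growing} lower bound $G(t)\ge Kt^{a_1}$; and from the $|\nabla u|^p$ term, a second-order inequality $G''\ge Ae^{-b_1(t+R)}|G|^p$. These two inputs are then fed into the ODE blow-up criterion of Lemma 2.1, whose compatibility condition $b_1-a_1(p-1)<2$ is where the exponent restriction and the lifespan exponent $\epsilon^{-(p-1)}$ are meant to come from. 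The essential missing idea in your proposal is a time-growing lower bound on the functional: without one, no Sideris/Kato-type argument closes against an exponentially decaying, integrable coefficient.
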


The organization of this paper is as follows. In Section 2, two new blow up criterions are given. Section 3 is devoted to prove the finite time blow up of solutions for the semilinear wave equation (\ref{E1-1}) on de Sitter spacetime. Furthermore, the estimate of the lifespan of solutions is obtained. In the last section, the proof of Theorem 1.2 is given.

\section{Some new blow up criterions}\setcounter{equation}{0}
In this section, we give two new blow up criterions for ordinary differential inequality.
\begin{lemma}
Let $p > 1$ and $b_1-a_1(p-1)<2$. Assume that $G\in\mathbb{C}^2([0, T ))$ satisfies
\begin{eqnarray}\label{E2-1RR}
&&G(t)\geq Kt^{a_1}~~~~for~t\geq T_0,\\
\label{E2-2RR}
&&G''(t)\geq Ae^{-b_1(t+R)}|G(t)|^p~~~~for~t>0,\\
\label{E2-3RR}
&&G(0)>0,~~G'(0)>0,
\end{eqnarray}
where $K$, $T_0$, $A$ and $R$ denote positive constants with $T_0\geq R$.

Then $T$ must satisfy that $T\leq 2T_1$ provided that $K\geq K_0$, where $K_0$ is a fixed positive constant.

Furthermore, we have the life span $T(\epsilon)$ of $F(t)$, which satisfies
\begin{eqnarray}\label{E2-ERR}
T(\epsilon)\leq C_0\epsilon^{-\frac{(p-1)}{(p-1)a_1-b_1+2}},
\end{eqnarray}
where $2^{-\frac{(p-1)a_1-b_1+2}{p-1}}\leq\epsilon\leq1$, $b_1>0$ and $C_0$ is a positive constant depending on $A$ and $R$ but independent of $\epsilon$.
\end{lemma}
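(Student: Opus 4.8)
The plan is to run a Sideris-type blow-up argument adapted to the time-decaying coefficient $e^{-b_1(t+R)}$. First I would record the elementary monotonicity facts: since the right-hand side of (\ref{E2-2RR}) is nonnegative, $G''\ge0$, so with $G'(0)>0$ from (\ref{E2-3RR}) the function $G$ is strictly increasing and positive on $[0,T)$; in particular $|G|^p=G^p$ and the lower bound (\ref{E2-1RR}) may be used freely for $t\ge T_0$.

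The heart of the proof is to convert the second-order inequality (\ref{E2-2RR}) into a first-order one by the energy (multiplier) method, taking care that the decaying weight does not destroy the estimate. Multiplying (\ref{E2-2RR}) by $G'>0$ and integrating over $[T_1,t]$ for a reference time $T_1\ge T_0$ to be chosen, I would integrate the forcing term by parts. The boundary contribution combines with the term produced by differentiating $e^{-b_1(s+R)}$, and after discarding the positive accumulated integral and $G'(T_1)^2$, one is left with the clean inequality
\begin{eqnarray*}
G'(t)^2\ge\frac{2A}{p+1}e^{-b_1(t+R)}\left(G(t)^{p+1}-G(T_1)^{p+1}\right).
\end{eqnarray*}
The point of the integration by parts is precisely that the $b_1$-term carries the favorable sign, so the decay of the coefficient is absorbed rather than fought. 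Once $G$ has grown past $2^{1/(p+1)}G(T_1)$ — which happens at some $T_2>T_1$ because $G$ is increasing and convex — the parenthesis exceeds $\tfrac12 G(t)^{p+1}$, giving $G'(t)\ge c\,e^{-b_1(t+R)/2}G(t)^{(p+1)/2}$.

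From here I would separate variables and integrate from $T_2$, using $\int_{T_2}^{\infty}e^{-b_1(s+R)/2}\,ds=\tfrac{2}{b_1}e^{-b_1(T_2+R)/2}<\infty$. The left side is bounded by $\tfrac{2}{p-1}G(T_2)^{-(p-1)/2}$, so $G$ cannot be global as soon as the strict inequality $\tfrac{2}{p-1}G(T_2)^{-(p-1)/2}<\tfrac{2c}{b_1}e^{-b_1(T_2+R)/2}$ holds; solving the resulting identity then yields an explicit finite blow-up time $T^{\ast}$. Feeding in the lower bound $G(T_2)\ge G(T_1)\ge KT_1^{a_1}$ turns this criterion into a lower bound on $K$, which is where the hypothesis $K\ge K_0$ enters: for $K$ large the subtracted term is negligible and $T^{\ast}\le 2T_1$. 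Finally, to obtain the lifespan (\ref{E2-ERR}) I would optimize over $T_1$: writing the blow-up condition schematically as $K^{p-1}T_1^{a_1(p-1)}\gtrsim(\text{weight depending on }b_1)$ and solving for the admissible $T_1$, the exponent $(p-1)a_1-b_1+2$ appears in the denominator, and the hypothesis $b_1-a_1(p-1)<2$ guarantees it is positive so the relation is solvable; over the prescribed range $2^{-[(p-1)a_1-b_1+2]/(p-1)}\le\epsilon\le1$ this delivers $T\le 2T_1\le C_0\,\epsilon^{-(p-1)/[(p-1)a_1-b_1+2]}$.

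The step I expect to be the main obstacle is the bookkeeping linking the three scales $T_0$, $T_1$, $T_2$: I must verify that $G$ actually reaches $2^{1/(p+1)}G(T_1)$ within $[T_1,2T_1]$ (using convexity and $G'(T_1)>0$) and that the blow-up criterion is met with $T^{\ast}\le 2T_1$ simultaneously, since the decaying weight $e^{-b_1(t+R)/2}$ and the growing lower bound $Kt^{a_1}$ pull in opposite directions. Reconciling these is exactly the content of $(p-1)a_1-b_1+2>0$, and tuning the constant $C_0$ and the threshold $K_0$ so as to respect the prescribed $\epsilon$-window is the delicate part.
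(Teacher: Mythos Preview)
Your blow-up argument follows the paper's Sideris-type scheme: multiply (\ref{E2-2RR}) by $G'$, integrate, reach $G'\gtrsim e^{-b_1(t+R)/2}G^{(p+1)/2}$, then separate variables and use (\ref{E2-1RR}) at the endpoint to force an upper bound on $K$ that contradicts $K\ge K_0$. One small correction: integration by parts does \emph{not} give your displayed ``clean inequality'' --- it leaves the subtracted boundary term carrying the larger weight $e^{-b_1(T_1+R)}$, not $e^{-b_1(t+R)}$, so the inequality you wrote does not follow from the mechanism you describe. It is nonetheless true, and comes more simply by first bounding $e^{-b_1(s+R)}\ge e^{-b_1(t+R)}$ on the interval and then integrating $G^pG'$ exactly, which is what the paper does (starting from $0$ rather than $T_1$). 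The paper also retains a free exponent $\delta\in(0,\tfrac{p-1}{2})$ and uses (\ref{E2-1RR}) to convert the residual factor $G^{(p-1)/2-\delta}$ into $K^{(p-1)/2-\delta}t^{a_1((p-1)/2-\delta)}$ before the last integration; your plan is the endpoint case $\delta=\tfrac{p-1}{2}$, and either choice yields the contradiction.

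The genuine divergence, and the gap, is in the lifespan step. Your blow-up criterion reads $G(T_2)^{(p-1)/2}\gtrsim e^{b_1(T_2+R)/2}$, so ``optimising over $T_1$'' means solving a relation in which a power of $T_1$ is compared with an exponential in $T_1$; this is transcendental and gives $T_1$ of order $\log K$, not a power of $K$ (or of $\epsilon$). The exponent $(p-1)a_1-b_1+2$ does not fall out of such an optimisation. The paper obtains it instead by a rescaling: setting
\[
\tau=t\,\epsilon^{\frac{p-1}{(p-1)a_1-b_1+2}},\qquad \mathcal{G}(\tau)=\epsilon^{\frac{b_1-2}{(p-1)a_1-b_1+2}}\,G\bigl(\tau\,\epsilon^{-\frac{p-1}{(p-1)a_1-b_1+2}}\bigr),
\]
one checks that $\mathcal{G}$ again satisfies (\ref{E2-1RR})--(\ref{E2-2RR}) with the same $A,R$ but with $K$ replaced by $K\epsilon^{-1}\ge K$, so the already-established bound $T\le 2T_1$ for $\mathcal{G}$ transfers to $G$ with the extra factor $\epsilon^{-(p-1)/[(p-1)a_1-b_1+2]}$. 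The hypothesis $b_1-a_1(p-1)<2$ is exactly what gives these scaling exponents the correct sign (and is what fixes the $\epsilon$-window in the statement); this rescaling device is the ingredient you should substitute for the optimisation.
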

\begin{proof}
Arguing by contradiction, assume that we have $T>2T_1$. Inequality (\ref{E2-2RR}) implies that
\begin{eqnarray*}
G''(t)>0,~~\forall t>0,
\end{eqnarray*}
which combining with (\ref{E2-3RR}) gives that
\begin{eqnarray}\label{E2-6RR}
G'(t)\geq G'(0)>0,~~G(t)\geq G'(0)t+G(0)\geq G(0)>0,~\forall t>0.
\end{eqnarray}
Multiplying (\ref{E2-2RR}) by $G'(t)$ and integrating it over $[0,t]$, we have
\begin{eqnarray*}
\frac{1}{2}G'(t)^2&\geq& A\int_0^te^{-b_1(s+R)}G^p(s)dG(s)+\frac{1}{2}G'(0)^2\nonumber\\
&>&\frac{A}{(p+1)}e^{-b_1(t+R)}(G(t)^{p+1}-G(0)^{p+1})\nonumber\\
&\geq&\frac{A}{(p+1)}e^{-b_1(t+R)}G(t)^{p}(G(t)-G(0)),~~\forall t>0.
\end{eqnarray*}
Restricting the time interval to $t\geq\frac{G(0)}{G'(0)}$ and making use of (\ref{E2-6RR}), we have
\begin{eqnarray*}
\frac{1}{2}G(t)-G(0)\geq\frac{1}{2}(G'(0)t-G(0))>0.
\end{eqnarray*}
Thus we get
\begin{eqnarray*}
G(t)'>(\frac{A}{p+1})^{\frac{1}{2}}e^{-\frac{b_1}{2}(t+R)}G^{\frac{p+1}{2}}(t),~~\forall t\geq\frac{G(0)}{G'(0)},
\end{eqnarray*}
which combining with (\ref{E2-1RR}) gives
\begin{eqnarray}\label{E2-7RR}
\frac{G(t)'}{G(t)^{1+\delta}}&>&(\frac{A}{p+1})^{\frac{1}{2}}e^{-\frac{b_1}{2}(t+R)}G^{\frac{p-1}{2}-\delta}(t)\nonumber\\
&\geq&(\frac{A}{p+1})^{\frac{1}{2}}K^{\frac{p-1}{2}-\delta}e^{-\frac{b_1}{2}(t+R)}t^{a_1(-\delta+\frac{p-1}{2})},~~\forall t\geq T_1\geq R,
\end{eqnarray}
where $\delta\in(0,\frac{p-1}{2})$ is a fixed positive constant.

Then integrating (\ref{E2-7RR}) over $[2T_1,t]$,
\begin{eqnarray*}
\delta^{-1}(G(2T_1)^{-\delta}-G(t)^{-\delta})>(\frac{A}{p+1})^{\frac{1}{2}}K^{\frac{p-1}{2}-\delta}\int_{2T_1}^te^{-\frac{b_1}{2}(s+R)}s^{a_1(-\delta+\frac{p-1}{2})}ds,
\end{eqnarray*}
which gives that
\begin{eqnarray}\label{E2-8RR}
\delta^{-1}G(2T_1)^{-\delta}>(\frac{A}{p+1})^{\frac{1}{2}}K^{\frac{p-1}{2}-\delta}\int_{2T_1}^te^{-\frac{b_1}{2}(s+R)}s^{a_1(-\delta+\frac{p-1}{2})}ds.
\end{eqnarray}
We define a function
\begin{eqnarray*}
\tilde{F}(t):=\int_{2T_1}^te^{-\frac{b_1}{2}(s+R)}s^{a_1(-\delta+\frac{p-1}{2})}ds,~~\forall~t> 2T_1.
\end{eqnarray*}
Note that $\delta\in(0,\frac{p-1}{2})$. It is easy to see that
\begin{eqnarray*}
\tilde{F}(t)>0,~~\forall~t> 2T_1,
\end{eqnarray*}
and
\begin{eqnarray*}
\frac{d\tilde{F}(t)}{dt}=e^{-\frac{b_1}{2}(t+R)}t^{a_1(-\delta+\frac{p-1}{2})}>0, ~~\forall~t> 2T_1.
\end{eqnarray*}
This implies that $\tilde{F}(t)$ is an unbounded increasing function in $(2T_1,+\infty)$. So $\tilde{F}(t)^{-1}$ is a bounded decreasing function in $(2T_1,+\infty)$, and
\begin{eqnarray}\label{rrr1}
\lim_{t\longrightarrow+\infty}\tilde{F}(t)^{-1}=0,~~\lim_{t\longrightarrow 2T_1^+}\tilde{F}(t)^{-1}=+\infty.
\end{eqnarray}
On the other hand, let $t=2T_1$ in (\ref{E2-1RR}), we get
\begin{eqnarray*}
G(T_1)\geq K(2T_1)^{a_1},
\end{eqnarray*}
which combining with (\ref{E2-8RR}) gives that
\begin{eqnarray*}
K^{-\delta}\geq[\frac{(2T_1)^{a_1}}{G(2T_1)}]^{\delta}>\delta(\frac{A}{p+1})^{\frac{1}{2}}(2T_1)^{a_1\delta}K^{\frac{p-1}{2}-\delta}\int_{2T_1}^te^{-\frac{b_1}{2}(s+R)}s^{a_1(-\delta+\frac{p-1}{2})}ds.
\end{eqnarray*}
This implies that
\begin{eqnarray}\label{rr6}
0<K<C_{a_1,A,p,\delta,T_1}(\int_{2T_1}^te^{-\frac{b_1}{2}(s+R)}s^{a_1(-\delta+\frac{p-1}{2})}ds)^{\frac{-2}{p-1}}=C_{a_1,A,p,\delta,T_1}\tilde{F}(t)^{\frac{-2}{p-1}},
\end{eqnarray}
where $C_{a_1,A,p,\delta,T_1}:=\delta^{\frac{-2}{p-1}}(\frac{A}{p+1})^{\frac{-1}{p-1}}T_1^{\frac{-2a_1\delta}{p-1}}$ is a positive constant.

Since $\tilde{F}(t)^{-1}\in(0,+\infty)$ is a decreasing function on $(2T_1,\infty)$, by (\ref{rrr1}), we know that there exists $t^*>2T_1$ such that
\begin{eqnarray*}
0<C_{a_1,A,p,\delta,T_1}\tilde{F}(t^*)^{\frac{-2}{p-1}}\leq K_0,
\end{eqnarray*}
which combining with (\ref{rr6}) gives that
\begin{eqnarray*}
K<K_0.
\end{eqnarray*}
This inequality contradicts to the choice of $K\geq K_0$. Therefore we conclude that $T\leq2T_1$.

In what follows, we prove the lifespan of $G(t)$. Let us make a translation
\begin{eqnarray}\label{rrr2}
&&\tau=t\epsilon^{\frac{(p-1)}{(p-1)a_1-b_1+2}},\nonumber\\
&&\mathcal{G}(\tau)=\epsilon^{\frac{(b_1-2)}{(p-1)a_1-b_1+2}}G(\tau\epsilon^{\frac{-(p-1)}{(p-1)a_1-b_1+2}}),
\end{eqnarray}
where $2^{-\frac{(p-1)a_1-b_1+2}{p-1}}\leq\epsilon\leq1$.

Note that $\epsilon^{-\frac{p-1}{(p-1)a_1-b_1+2}}-1\geq1$ and $\epsilon^{\frac{-b_1(p-1)}{(p-1)a_1-b_1+2}}\geq1$ with $b_1>0$.
Using (\ref{E2-2RR}) and (\ref{rrr2}), we derive
\begin{eqnarray*}\label{rrr3}
\mathcal{G}''(\tau)&=&\epsilon^{\frac{(b_1-2)-2(p-1)}{(p-1)a_1-b_1+2}}G''(\tau\epsilon^{\frac{-(p-1)}{(p-1)a_1-b_1+2}})\nonumber\\
&\geq&\epsilon^{\frac{(b_1-2)-2(p-1)}{(p-1)a_1-b_1+2}}Ae^{-b_1(\tau\epsilon^{\frac{-(p-1)}{(p-1)a_1-b_1+2}}+R)}|G(\tau\epsilon^{\frac{-(p-1)}{(p-1)a_1-b_1+2}})|^p\nonumber\\
&=&\epsilon^{\frac{-(b_1-2)(p-1)-2(p-1)}{(p-1)a_1-b_1+2}}Ae^{-b_1(\tau\epsilon^{\frac{-(p-1)}{(p-1)a_1-b_1+2}}+R)}|\mathcal{G}(\tau)|^p\nonumber\\
&=&\epsilon^{\frac{-b_1(p-1)}{(p-1)a_1-b_1+2}}e^{-b_1\tau(\epsilon^{\frac{-(p-1)}{(p-1)a_1-b_1+2}}-1)}Ae^{-b_1(\tau+R)}|\mathcal{G}(\tau)|^p\nonumber\\
&\geq&Ae^{-b_1(\tau+R)}|\mathcal{G}(\tau)|^p.
\end{eqnarray*}
On the other hand, by (\ref{E2-1RR}) and (\ref{rrr2}), we have
\begin{eqnarray*}\label{rrr4}
\mathcal{G}(\tau)&\geq&K\epsilon^{\frac{(b_1-2)}{(p-1)a_1-b_1+2}}\tau^{a_1}\epsilon^{\frac{-a_1(p-1)}{(p-1)a_1-b_1+2}}\nonumber\\
&=&K\epsilon^{-1}\tau^{a_1}\nonumber\\
&\geq&K\tau^{a_1}.
\end{eqnarray*}
Hence $\mathcal{G}(\tau)$ will blow up in finite time and the life span of $G(t)$ satisfies (\ref{E2-ERR}).
This completes the proof.
\end{proof}

\begin{lemma}
Let $p > 1$. Assume that $G\in\mathbb{C}^2([0, T ))$ satisfies
\begin{eqnarray}\label{E2-1}
&&G(t)\geq Ka(t)~~~~for~t\geq T_0,\\
\label{E2-2}
&&G''(t)\geq Ab^{-1}(t+R)|G(t)|^p~~~~for~t>0,\\
\label{E2-3}
&&G(0)>0,~~G'(0)>0,
\end{eqnarray}
where $K$, $T_0$, $A$ and $R$ denote positive constants with $T_0\geq R$, $a(t)$ and $b(t)$ are positive strictly increasing smooth functions and $b^{-\frac{1}{2}}(t+R)a^{\frac{p-1}{2}-\delta}(t)\in(0,\infty)$ is a strictly decreasing smooth function for $t>0$, and there exist $t^{**}>2T_1$ and a positive constant $K_0$ such that
\begin{eqnarray}\label{E2-4}
\delta^{\frac{2}{p-1}}(\frac{A}{p+1})^{\frac{1}{p-1}}K_0^{-1}a^{\frac{2\delta}{p-1}}(2T_1)\leq(\int_{2T_1}^{t^{**}}b(t+R)^{-\frac{1}{2}}a^{\frac{p-1}{2}-\delta}(t)dt)^{\frac{2}{p-1}}.
\end{eqnarray}
Then $T$ must satisfy that $T\leq 2T_1$ provided that $K\geq K_0$.
\end{lemma}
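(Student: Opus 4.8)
The plan is to mirror the proof of Lemma 2.1, replacing the exponential weight $e^{-b_1(t+R)}$ by the general reciprocal weight $b^{-1}(t+R)=1/b(t+R)$ and the power $t^{a_1}$ by the general lower bound $a(t)$; the monotonicity of $a$ and $b$ is precisely what makes each step go through. Arguing by contradiction, suppose $T>2T_1$. Since $b^{-1}(t+R)>0$, inequality (\ref{E2-2}) forces $G''(t)>0$ for all $t>0$, so together with (\ref{E2-3}) the function $G$ is positive, increasing and convex, giving $G'(t)\geq G'(0)>0$ and $G(t)\geq G'(0)t+G(0)>0$.

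Next I would multiply (\ref{E2-2}) by $G'(t)$ and integrate over $[0,t]$. Because $b$ is strictly increasing, $b^{-1}(\cdot+R)$ is decreasing, so $b^{-1}(s+R)\geq b^{-1}(t+R)$ for $s\leq t$ and the weight can be pulled outside the integral, yielding $\tfrac12 G'(t)^2>\tfrac{A}{p+1}b^{-1}(t+R)G(t)^p(G(t)-G(0))$. Restricting to $t\geq G(0)/G'(0)$ gives $G(t)-G(0)\geq\tfrac12 G(t)$, whence $G'(t)>(\tfrac{A}{p+1})^{1/2}b^{-1/2}(t+R)G(t)^{(p+1)/2}$. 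Dividing by $G^{1+\delta}$ for a fixed $\delta\in(0,\tfrac{p-1}{2})$ and inserting the lower bound (\ref{E2-1}) (legitimate since $\tfrac{p-1}{2}-\delta>0$) produces, for $t\geq 2T_1$,
\[
\frac{G'(t)}{G(t)^{1+\delta}}>\Big(\frac{A}{p+1}\Big)^{1/2}K^{\frac{p-1}{2}-\delta}\,b^{-1/2}(t+R)\,a(t)^{\frac{p-1}{2}-\delta}.
\]

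I would then integrate this from $2T_1$ to $t$, discard the favorable $-G(t)^{-\delta}$ term, and use $G(2T_1)\geq K a(2T_1)$ from (\ref{E2-1}). Collecting the powers of $K$ (the net exponent reducing to $-\tfrac{p-1}{2}$) and raising to the power $-2/(p-1)$ gives a bound of the shape
\[
0<K<C\,\tilde F(t)^{-\frac{2}{p-1}},\qquad \tilde F(t)=\int_{2T_1}^{t}b^{-1/2}(s+R)\,a(s)^{\frac{p-1}{2}-\delta}\,ds,
\]
with $C=\delta^{-2/(p-1)}(\tfrac{A}{p+1})^{-1/(p-1)}a(2T_1)^{-2\delta/(p-1)}$. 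Positivity of the integrand makes $\tilde F$ strictly increasing, so $\tilde F(t)^{-2/(p-1)}$ is decreasing; the hypothesis that $b^{-1/2}(t+R)a^{\frac{p-1}{2}-\delta}(t)$ is decreasing controls its growth.

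The crucial step is where Lemma 2.2 departs from Lemma 2.1. There the exponential weight drove the right-hand bound to zero as $t$ grew, forcing the contradiction automatically; with a general reciprocal weight $\tilde F$ may remain bounded, so one cannot merely let $t\to\infty$. This is exactly the role of hypothesis (\ref{E2-4}): evaluated at the prescribed point $t^{**}>2T_1$ it is precisely the quantitative assertion $C\,\tilde F(t^{**})^{-2/(p-1)}\leq K_0$. Substituting $t=t^{**}$ into the bound above then yields $K<K_0$, contradicting $K\geq K_0$, and hence $T\leq 2T_1$. I expect the one genuinely delicate point to be the constant bookkeeping: one must verify that the exponents of $\delta$, $\tfrac{A}{p+1}$ and $a(2T_1)$ appearing in $C$ match those in (\ref{E2-4}) so that the comparison $C\,\tilde F(t^{**})^{-2/(p-1)}\leq K_0$ points in the correct direction.
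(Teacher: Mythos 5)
Your proposal reproduces the paper's own proof of Lemma 2.2 step for step: contradiction, convexity from (\ref{E2-2})--(\ref{E2-3}), multiplication by $G'$ with the weight $b^{-1}(s+R)\geq b^{-1}(t+R)$ pulled out of the integral by monotonicity of $b$, the restriction $t\geq G(0)/G'(0)$, division by $G^{1+\delta}$, insertion of (\ref{E2-1}), integration over $[2T_1,t]$ with the favorable term discarded, and the use of $G(2T_1)\geq Ka(2T_1)$ to isolate $K$; the role you assign to (\ref{E2-4}) as the substitute for the ``let $t\to\infty$'' step of Lemma 2.1 is exactly the paper's. The one point you flagged as delicate is, in fact, where the paper itself slips. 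The correct consequence of the chain is
\begin{equation*}
K<\delta^{-\frac{2}{p-1}}\Big(\frac{A}{p+1}\Big)^{-\frac{1}{p-1}}a(2T_1)^{-\frac{2\delta}{p-1}}\,\tilde F(t)^{-\frac{2}{p-1}},
\end{equation*}
which is your constant $C$ and is consistent with the constant $C_{a_1,A,p,\delta,T_1}$ appearing in Lemma 2.1; the paper's inequality (\ref{ff1}), and hence the hypothesis (\ref{E2-4}) that was tailored to it, instead carry the reciprocal prefactor $\delta^{\frac{2}{p-1}}(\frac{A}{p+1})^{\frac{1}{p-1}}a^{\frac{2\delta}{p-1}}(2T_1)$ --- the exponent $-\frac{2}{p-1}$ was applied only to the integral and not to the other factors. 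Consequently (\ref{E2-4}) as literally written is \emph{not} ``precisely'' the assertion $C\,\tilde F(t^{**})^{-2/(p-1)}\leq K_0$ that your argument needs; it is that assertion with $C$ replaced by $C^{-1}$, and the two agree only when $C=1$. This is a normalization error in the statement of the lemma rather than a gap in your reasoning: since $K_0$ and $t^{**}$ are free parameters to be exhibited by whoever applies the lemma, one simply restates (\ref{E2-4}) with the exponents of $\delta$, $\frac{A}{p+1}$ and $a(2T_1)$ negated, i.e.\ requires $\delta^{-\frac{2}{p-1}}(\frac{A}{p+1})^{-\frac{1}{p-1}}a^{-\frac{2\delta}{p-1}}(2T_1)\leq K_0\,\tilde F(t^{**})^{\frac{2}{p-1}}$, and your proof then closes exactly as you describe.
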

\begin{proof}
Arguing by contradiction, assume that we have $t>2T_1$. It follows from (\ref{E2-2})-(\ref{E2-3}) that
\begin{eqnarray}\label{E2-6}
G'(t)\geq G'(0)>0,~~G(t)\geq G'(0)t+G(0)\geq G(0)>0,~\forall t>0.
\end{eqnarray}
Note that $b$ is an increasing positive smooth function.
Multiplying (\ref{E2-2}) by $G'(t)$ and integrating it over $[0,t]$, we have
\begin{eqnarray*}
\frac{1}{2}G'(t)^2&\geq& A\int_0^tb^{-1}(s+R)G^p(s)dG(s)+\frac{1}{2}G'(0)^2\nonumber\\
&>&\frac{A}{(p+1)b(t+R)}(G(t)^{p+1}-G(0)^{p+1})\nonumber\\
&\geq&\frac{A}{(p+1)b(t+R)}G(t)^{p}(G(t)-G(0)),~~\forall t>0.
\end{eqnarray*}
Restricting the time interval to $t\geq\frac{G(0)}{G'(0)}$ and making use of (\ref{E2-6}), we have
\begin{eqnarray*}
\frac{1}{2}G(t)-G(0)\geq\frac{1}{2}(G'(0)t-G(0))>0.
\end{eqnarray*}
Thus we get
\begin{eqnarray*}
G(t)'>(\frac{A}{p+1})^{\frac{1}{2}}\frac{G^{\frac{p+1}{2}}(t)}{b^{\frac{1}{2}}(t+R)},~~\forall t\geq\frac{G(0)}{G'(0)},
\end{eqnarray*}
which combines with (\ref{E2-1}), for any $\delta\in(0,\frac{p-1}{2})$, we find
\begin{eqnarray}\label{E2-7}
\frac{G(t)'}{G(t)^{1+\delta}}&>&(\frac{A}{p+1})^{\frac{1}{2}}\frac{G^{\frac{p-1}{2}-\delta}(t)}{b^{\frac{1}{2}}(t+R)}\nonumber\\
&\geq&(\frac{A}{p+1})^{\frac{1}{2}}\frac{K^{\frac{p-1}{2}-\delta}}{b^{\frac{1}{2}}(t+R)a^{\delta-\frac{p-1}{2}}(t)},~~\forall t\geq T_1\geq R.
\end{eqnarray}
Integrating (\ref{E2-7}) over $[2T_1,t]$, we have
\begin{eqnarray*}
\delta^{-1}(G(2T_1)^{-\delta}-G(t)^{-\delta})>(\frac{A}{p+1})^{\frac{1}{2}}K^{\frac{p-1}{2}-\delta}\int_{2T_1}^tb(s+R)^{-\frac{1}{2}}a(s)^{\frac{p-1}{2}-\delta}ds,
\end{eqnarray*}
which gives that
\begin{eqnarray*}\label{E2-8R}
\delta^{-1}G(2T_1)^{-\delta}>(\frac{A}{p+1})^{\frac{1}{2}}K^{\frac{p-1}{2}-\delta}\int_{2T_1}^tb(s+R)^{-\frac{1}{2}}a(s)^{\frac{p-1}{2}-\delta}ds.
\end{eqnarray*}
Then making use of (\ref{E2-1}) with $t=2T_1$,
\begin{eqnarray*}
K^{-\delta}\geq(\frac{a(2T_1)}{G(2T_1)})^{\delta}>\delta(\frac{A}{p+1})^{\frac{1}{2}}a^{\delta}(2T_1)K^{\frac{p-1}{2}-\delta}\int_{2T_1}^tb(s+R)^{-\frac{1}{2}}a(s)^{\frac{p-1}{2}-\delta}ds,
\end{eqnarray*}
which implies that
\begin{eqnarray}\label{ff1}
0<K<\delta^{\frac{2}{p-1}}(\frac{A}{p+1})^{^{\frac{1}{p-1}}}a^{^{\frac{2\delta}{p-1}}}(2T_1)(\int_{2T_1}^tb(s+R)^{-\frac{1}{2}}a(s)^{\frac{p-1}{2}-\delta}ds)^{-\frac{2}{p-1}}.
\end{eqnarray}
On the other hand, by (\ref{E2-4}), we know that there exists $t^{**}> 2T_1$ such that
\begin{eqnarray}\label{ffE2-9}
\delta^{\frac{2}{p-1}}(\frac{A}{p+1})^{\frac{1}{p-1}}K_0^{-1}a^{\frac{2\delta}{p-1}}(2T_1)\leq(\int_{2T_1}^{t^{**}}b(t+R)^{-\frac{1}{2}}a^{\frac{p-1}{2}-\delta}(t)dt)^{\frac{2}{p-1}}.
\end{eqnarray}
So by (\ref{ff1})-(\ref{ffE2-9}), we get
\begin{eqnarray*}
0<K<K_0.
\end{eqnarray*}
This inequality contradicts to the choice of $K\geq K_0$. Therefore we conclude that $T\leq2T_1$.
This completes the proof.
\end{proof}

\begin{remark}
We know that Kato's Lemma (see \cite{Kato}) gives an important ODE blow up criterion. The key second inequality in Kato's Lemma is 
\begin{eqnarray*}
u''\geq bt^{-1-p}u^p,
\end{eqnarray*}
with $p>1$, $b>0$ and $t$ large.
But this result can not apply to some special decreasing inequalities, $e.g.$ $u''\geq be^{-Mt}u^p$ with $M>0$, $p>1$, $b>0$ and $t$ large. Yagdjian \cite{Yag3} gave a more generalized Kato's Lemma which can be used in a large class of finite time blow up problems, $e.g.$ the finite time blow up of solutions for the semilinear Klein-Gordon equations in de Sitter spacetime (see \cite{Yag3}). Comparing with our generalized ODE blow up criterion, the main difference between two results is that Lemma 2.1 tells us the relationship between the exponent $a_1$, $b_1$ and $p$. This is key point to prove the finite time blow up of solutions for the semilinear wave equation in de Sitter spacetime, $i.e.$ the Strauss conjecture in de Sitter spacetime. In fact, if we give the exact form of the functions $a(t)$ and $b(t)$ in Lemma 2.2, then the relationship between the exponent $a_1$, $b_1$ and $p$ ensures (\ref{E2-4}).
\end{remark}

\begin{remark}
Lemma 2.2 tells us that if we get a $\mathbb{C}^2$ function $G(t)$ which is not a polynomial increasing function, then we can also obtain a blow up criterion. We think that this result can be applied to show the blow up of solutions for a kind of non-autonomous nonlinear wave equations, for example, the Tricomi-type equation.
\end{remark}

Using the idea of Yordanov and Zhang \cite{Yorda1}, we introduce %$\phi_0(x)\in\mathbb{C}^2(\mathbb{R}^n)$ and
\begin{eqnarray*}
\phi_1(x)=\int_{\mathbb{S}^{n-1}}e^{x\cdot\omega}d\omega\geq0,
\end{eqnarray*}
which is a solution of
\begin{eqnarray*}
%&&\triangle\phi_0(x)+V(x)\phi_0(x)=0,\\
&&\triangle\phi_1(x)=\phi_1(x).
\end{eqnarray*}
Then, one can verify $\phi_1(x)$ (see \cite{zhou3}) such that
\begin{eqnarray}
%\label{E2-100}
%&&C^{-1}\leq\phi_0(x)\leq C,\\
\label{E2-101}
&&0<\phi_1(x)\leq Ce^{|x|}(1+|x|)^{-\frac{n-1}{2}},~~n\geq2,\\
\label{E2-102}
&&\phi_1(x)\sim C_ne^{|x|}|x|^{-\frac{n-1}{2}}~~as~~|x|\longrightarrow\infty,
\end{eqnarray}
where $C$ is a positive constant.

Moreover, we introduce a test function
\begin{eqnarray*}
\psi_1(t,x)=e^{-t}\phi_1(x).
\end{eqnarray*}
It is easy to see
\begin{eqnarray*}
\triangle\psi_1(t,x)=\psi_1(t,x).
\end{eqnarray*}
One can see \cite{Yorda1,Yorda2,zhou3} for more details.

Furthermore, we have the following result, which is taken from the paper of Yordanov and Zhang \cite{Yorda1}.
\begin{lemma}
Let $p>1$. $\phi_1(t)$ satisfies (\ref{E2-101})-(\ref{E2-102}). Then
\begin{eqnarray*}\label{E2-103}
\int_{|x|\leq t+1}(\psi_1(t,x))^{\frac{p}{p-1}}dx\leq C(1+t)^{n-1-\frac{(n-1)p}{2(p-1)}},~~\forall t>0.
\end{eqnarray*}
\end{lemma}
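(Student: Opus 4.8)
The plan is to reduce the $n$-dimensional integral to a one-dimensional exponential integral and extract the polynomial rate from the concentration of $e^{qr}$ near the upper endpoint $r=t+1$. Write $q=\frac{p}{p-1}$ for the conjugate exponent, so that $(\psi_1(t,x))^{p/(p-1)}=e^{-qt}\phi_1(x)^q$. First I would insert the pointwise upper bound (\ref{E2-101}), namely $\phi_1(x)\le Ce^{|x|}(1+|x|)^{-\frac{n-1}{2}}$, to obtain $\phi_1(x)^q\le C^q e^{q|x|}(1+|x|)^{-\frac{(n-1)q}{2}}$, and then pass to polar coordinates on the ball $\{|x|\le t+1\}$. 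Up to the surface measure of $\mathbb{S}^{n-1}$, this turns the left-hand side into $C\,e^{-qt}\int_0^{t+1}e^{qr}\,r^{n-1}(1+r)^{-\frac{(n-1)q}{2}}\,dr$.

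Next I would collapse the two algebraic factors using $r^{n-1}\le(1+r)^{n-1}$, so the radial integrand is controlled by $(1+r)^{\alpha}e^{qr}$ with $\alpha:=n-1-\frac{(n-1)q}{2}$. The whole point is to show $e^{-qt}\int_0^{t+1}e^{qr}(1+r)^{\alpha}\,dr\le C(1+t)^{\alpha}$, because $\alpha$ is exactly the target exponent: substituting $q=\frac{p}{p-1}$ gives $\frac{(n-1)q}{2}=\frac{(n-1)p}{2(p-1)}$, hence $\alpha=n-1-\frac{(n-1)p}{2(p-1)}$. To prove this bound I would change variables by $s=(t+1)-r$, which factors out the exponential: the integral becomes $e^{q(t+1)}\int_0^{t+1}e^{-qs}(2+t-s)^{\alpha}\,ds$, so after multiplying by $e^{-qt}$ it suffices to establish $\int_0^{t+1}e^{-qs}(2+t-s)^{\alpha}\,ds\le C(1+t)^{\alpha}$.

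I would prove this last inequality by splitting the $s$-interval at its midpoint $\frac{t+1}{2}$. On $[0,\frac{t+1}{2}]$ the weight satisfies $2+t-s\ge\frac{t+3}{2}$, so $(2+t-s)^{\alpha}$ is comparable to $(1+t)^{\alpha}$ up to a fixed constant (for either sign of $\alpha$), while $\int_0^{\infty}e^{-qs}\,ds=\frac1q<\infty$ contributes only a constant. On $[\frac{t+1}{2},t+1]$ the factor $e^{-qs}\le e^{-q(t+1)/2}$ is exponentially small and dominates any polynomial weight, so this piece is also $\le C(1+t)^{\alpha}$, since exponential decay beats the polynomial $(1+t)^{\alpha}$ for all $t>0$. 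Combining the two pieces gives $e^{-qt}\int_0^{t+1}e^{qr}(1+r)^{\alpha}\,dr\le Ce^{q}(1+t)^{\alpha}$, and tracing the constants back yields the asserted bound $C(1+t)^{n-1-\frac{(n-1)p}{2(p-1)}}$.

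The only delicate point is the sign of $\alpha$: when $\alpha<0$ the weight $(1+r)^{\alpha}$ is decreasing, so one cannot simply bound it by its value at the top endpoint as in the case $\alpha\ge0$. The substitution $s=(t+1)-r$ together with the midpoint splitting resolves this uniformly, exploiting that the mass of $e^{qr}$ concentrates near $r=t+1$ irrespective of the algebraic weight; note that the lower bound (\ref{E2-102}) is not needed for this upper estimate. For small $t$ the left-hand side is a finite continuous function on a compact domain and $(1+t)^{\alpha}$ is bounded below, so the constant $C$ absorbs that range, and the content of the estimate lies entirely in the large-$t$ asymptotics described above.
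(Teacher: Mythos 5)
Your argument is correct: reducing to polar coordinates via the upper bound (\ref{E2-101}), absorbing $r^{n-1}\le(1+r)^{n-1}$, and then showing $e^{-qt}\int_0^{t+1}e^{qr}(1+r)^{\alpha}dr\le C(1+t)^{\alpha}$ for $q=\frac{p}{p-1}$ and $\alpha=n-1-\frac{(n-1)q}{2}$ by the substitution $s=(t+1)-r$ and a midpoint split handles both signs of $\alpha$ cleanly, and you correctly observe that the lower asymptotics (\ref{E2-102}) are not needed. The paper itself offers no proof of this lemma --- it is quoted verbatim from Yordanov and Zhang \cite{Yorda1} --- and your computation is essentially the standard argument from that reference (concentration of $e^{qr}$ near the endpoint $r=t+1$), so there is nothing to fault and no genuinely different route to compare against.
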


\section{Proof of Theorem 1.1}\setcounter{equation}{0}
Define
\begin{eqnarray*}
G(t)=\int_{\mathbb{R}^n}u(t,x)dx.
%&&F_1(t)=\int_{\mathbb{R}^n}u(t,x)\psi_1(x).
\end{eqnarray*}
Integrating (\ref{E1-1R1}), 
\begin{eqnarray}\label{E3-3}
\frac{d^2}{dt^2}G(t)=\partial_{tt}\int_{\mathbb{R}^n}u(t,x)dx=e^{-\frac{n}{2}(p-1)Ht}\int_{\mathbb{R}^n}|u|^p(t,x)dx.
\end{eqnarray}
Applying the H$\ddot{o}$lder inequality to the right hand side of (\ref{E3-3}), we have
\begin{eqnarray}\label{E3-3RE}
e^{-\frac{n}{2}(p-1)Ht}\int_{\mathbb{R}^n}|u|^p(t,x)dx&\geq& e^{-\frac{n}{2}(p-1)Ht}|\int_{\mathbb{R}^n}|u(t,x)dx|^p(\int_{|x|\leq t+1}dx)^{1-p}\nonumber\\
&\geq& Ce^{-\frac{n}{2}(p-1)Ht}vol(\textbf{B}^n)(t+1)^{-n(p-1)}|\int_{\mathbb{R}^n}u(t,x)dx|^p.~~~~~~~
\end{eqnarray}
Note that $\sup_{a,b>0}a^be^{-a}=(be^{-1})^b$ and $0<H<2$. It follows from (\ref{E3-3RE}) that
\begin{eqnarray}\label{E3-4}
e^{-\frac{n}{2}(p-1)Ht}\int_{\mathbb{R}^n}|u|^p(t,x)dx
\geq C'e^{-n(p-1)t}|\int_{\mathbb{R}^n}|u(t,x)dx|^p,~~
\end{eqnarray}
where $C'=Ce^{-1}(n^{-1}(p-1)^{-1}e)^{n(p-1)}vol(\textbf{B}^n)[n(p-1)(1-\frac{H}{2})]^{n(p-1)}$.

Thus by (\ref{E3-3}) and (\ref{E3-4}), we get
\begin{eqnarray}\label{E3-5}
G''(t)\geq C'e^{-n(p-1)t}|G(t)|^p.
\end{eqnarray}
On the other hand, applying the H$\ddot{o}$lder inequality to (\ref{E1-1R1}) and by Lemma 2.3, 
\begin{eqnarray}\label{E3-3R}
G''(t)&=&e^{-\frac{n}{2}(p-1)Ht}\int_{\mathbb{R}^n}|u|^p(t,x)dx\nonumber\\
&\geq& e^{-\frac{n}{2}(p-1)Ht}|\int_{\mathbb{R}^n}|u(t,x)\psi_1(t,x)dx|^p(\int_{|x|\leq t+1}(\psi_1(t,x))^{\frac{p}{p-1}}dx)^{1-p}\nonumber\\
&\geq& Ce^{-\frac{n}{2}(p-1)Ht}(1+t)^{(n-1)(1-\frac{p}{2})}\int_{\mathbb{R}^n}|u(t,x)\psi_1(t,x)dx|^pdx\nonumber\\
&=&C(1+t)^{(n-1)(1-\frac{p}{2})}\int_{\mathbb{R}^n}|e^{-\frac{n}{2p}(p-1)Ht}u(t,x)\psi_1(t,x)dx|^pdx.
\end{eqnarray}
Next we estimate the right hand side of the inequality (\ref{E3-3R}). Note that $0<H<2$.
Multiplying both side of (\ref{E1-1R1}) by the test function $\psi_2(t,x)=e^{-(\frac{n}{2p}(p-1)H+1)t}
\phi(x)$ and integrating by parts over $\mathbb{R}^n$, we have
\begin{eqnarray}\label{E3-6}
\int_{\mathbb{R}^n}\psi_2(u_{tt}-e^{-2Ht}u)dx=e^{-\frac{n}{2}(p-1)Ht}\int_{\mathbb{R}^n}\psi_2|u|^p.
\end{eqnarray}
Since
\begin{eqnarray}\label{E3-7}
\frac{d}{dt}\int_{\mathbb{R}^n}\psi_2u_tdx=\int_{\mathbb{R}^n}[\psi_2u_{tt}-(\frac{n}{2p}(p-1)H+1)\psi_2u_t]dx,
\end{eqnarray}
\begin{eqnarray}
\label{E3-8}
(\frac{n}{2p}(p-1)H&+&1)e^{-[(\frac{n}{2p}(p-1)H+1)t+\frac{1}{2(\frac{n}{2p}(p-1)H+1)H}e^{-2Ht}]}\nonumber\\
&&\times\frac{d}{dt}\int_{\mathbb{R}^n}(e^{[(\frac{n}{2p}(p-1)H+1)t+\frac{1}{2(\frac{n}{2p}(p-1)H+1)H}e^{-2Ht}]}\psi_2u)dt\nonumber\\
&=&\int_{\mathbb{R}^n}((\frac{n}{2p}(p-1)H+1)\psi_2u_t-e^{-2Ht}\psi_2u)dx,
\end{eqnarray}
summing up (\ref{E3-7})-(\ref{E3-8}), by noticing (\ref{E3-6}), we get
\begin{eqnarray}\label{E3-9}
\frac{d}{dt}\int_{\mathbb{R}^n}\psi_2u_tdx&+&
(\frac{n}{2p}(p-1)H+1)e^{-[(\frac{n}{2p}(p-1)H+1)t+\frac{1}{2(\frac{n}{2p}(p-1)H+1)H}e^{-2Ht}]}\nonumber\\
&&\times\frac{d}{dt}\int_{\mathbb{R}^n}(e^{[(\frac{n}{2p}(p-1)H+1)t+\frac{1}{2(\frac{n}{2p}(p-1)H+1)H}e^{-2Ht}]}\psi_2u)dt\nonumber\\
&=&\int_{\mathbb{R}^n}(\psi_2u_{tt}-e^{-2Ht}\psi_2u)dx\nonumber\\
&=&e^{-\frac{n}{2}(p-1)Ht}\int_{\mathbb{R}^n}\psi_2|u|^pdx.~~~~~~~~~
\end{eqnarray}
It follows from integrating (\ref{E3-9}) on $[0,t]$ that
\begin{eqnarray}\label{E3-10}
\int_{\mathbb{R}^n}\psi_2[u_t+(\frac{n}{2p}(p-1)H+1)u]dx&+&\int_0^t\int_{\mathbb{R}^n}[(\frac{n}{2p}(p-1)H+1)^2-e^{-2Ht}]\psi_2udxdt\nonumber\\
&=&\int_{\mathbb{R}^n}\phi_1[u_1+(\frac{n}{2p}(p-1)H+1)u_0]dx\nonumber\\
&&+\int_0^t\int_{\mathbb{R}^n}e^{-\frac{n}{2}(p-1)Ht}\psi_2|u|^pdx.~~~~~~~~
\end{eqnarray}
Let
\begin{eqnarray*}
G_1(t)=\int_{\mathbb{R}^n}\psi_2(t,x)u(t,x)dx.
\end{eqnarray*}
Since
\begin{eqnarray*}
&&e^{-\frac{n}{2}(p-1)Ht}\psi_2|u|^p>0,\\
&&\int_{\mathbb{R}^n}\phi_1[u_1+(\frac{n}{2p}(p-1)H+1)u_0]dx>0,
\end{eqnarray*}
the equality (\ref{E3-10}) gives that
\begin{eqnarray*}
\int_{\mathbb{R}^n}\psi_2[u_t+(\frac{n}{2p}(p-1)H+1)u]dx+\int_0^t\int_{\mathbb{R}^n}[(\frac{n}{2p}(p-1)H+1)^2-e^{-2Ht}]\psi_2udxdt\geq0,
\end{eqnarray*}
which means that
\begin{eqnarray}\label{E3-11}
\frac{d}{dt}G_1(t)+2(\frac{n}{2p}(p-1)H+1)G_1(t)+\int_0^t\int_{\mathbb{R}^n}[(\frac{n}{2p}(p-1)H+1)^2-e^{-2Ht}]\psi_2udxdt
\geq0.~~~~~
\end{eqnarray}
Note that $0<H<2$ and
\begin{eqnarray*}
G'_1(0)+2(\frac{n}{2p}(p-1)H+1)G_1(0)=\int_{\mathbb{R}^n}\phi_1[u_1+(\frac{n}{2p}(p-1)H+1)u_0]\geq0.
\end{eqnarray*}
Thus if we can prove the second order differential inequality
\begin{eqnarray}\label{E3-12}
\frac{d^2}{dt^2}G_1(t)+2(\frac{n}{2p}(p-1)H+1)\frac{d}{dt}G_1(t)+[(\frac{n}{2p}(p-1)H+1)^2-e^{-2Ht}]G_1(t)
\geq0,~~~
\end{eqnarray}
then (\ref{E3-11}) holds.

Note that (\ref{E3-12}) is a Riccati-type differential inequality. It is difficult to solve it. But
we know that $G_1(t)>0$ by noticing (\ref{E2-102}) and $G(t)>0$. To see (\ref{E3-12}), it is sufficient to show
\begin{eqnarray}\label{E3-13}
\frac{d^2}{dt^2}G_1(t)+2(\frac{n}{2p}(p-1)H+1)\frac{d}{dt}G_1(t)\geq0.
\end{eqnarray}
Direct computation of (\ref{E3-13}), we have
\begin{eqnarray}\label{E3-14}
\frac{d}{dt}G_1(t)+2(\frac{n}{2p}(p-1)H+1)G_1(t)\geq G_1'(0)+2(\frac{n}{2p}(p-1)H+1)G_1(0),
\end{eqnarray}
where $G_1'(0)=\int_{\mathbb{R}^n}\phi_1(u_1-u_0)dx$ and $G_1(0)=\int_{\mathbb{R}^n}\phi_1u_0dx$.

We multiply (\ref{E3-14}) by $e^{2(\frac{n}{2p}(p-1)H+1)t}$ and integrate on $[0,t]$, we get
\begin{eqnarray*}
G_1(t)&\geq&\frac{1}{2}(1-e^{-2(\frac{n}{2p}(p-1)H+1)t})\int_{\mathbb{R}^n}\phi_1[u_1+(\frac{n}{2p}(p-1)H+1)u_0]\nonumber\\
&&+e^{-(\frac{n}{2p}(p-1)H+1)t}\int_{\mathbb{R}^n}\phi_1u_0dx\nonumber\\
&\geq&C>0,
\end{eqnarray*}
which combining with (\ref{E3-3R}) gives that
\begin{eqnarray}\label{E3-15}
G''(t)\geq C(1+t)^{(n-1)(1-\frac{p}{2})}.
\end{eqnarray}
Hence, integrating (\ref{E3-15}) twice on $[0,t]$, we obtain
\begin{eqnarray}\label{E3-16}
G(t)\geq C(1+t)^{(n-1)(1-\frac{p}{2})+2},
\end{eqnarray}
where we use $G'(0)>0$ and $G(0)>0$.

Let
\begin{eqnarray*}
&&a_1=(n-1)(1-\frac{p}{2})+2,\\
&&b_1=n(p-1),\\
&&A=Ce^{-1}(n^{-1}(p-1)^{-1}e)^{n(p-1)}vol(\textbf{B}^n)[n(p-1)(1-\frac{H}{2})]^{n(p-1)}.
%&&\tilde{K}=e^{-\frac{b_1}{2}(R+1)}(-\frac{b_1}{2}+a_1(-\delta+\frac{p-1}{2})+1)^{-1}T_1^{-\frac{b_1}{2}+\frac{a_1(p-1)}{2}+1}(2^{-\frac{b_1}{2}+a_1(-\delta+\frac{p-1}{2})+1}-1),\\
%&&K_0=(\delta\tilde{K}\sqrt{\frac{p+1}{A}})^{\frac{2}{1-p}}.
\end{eqnarray*}
Then
\begin{eqnarray*}
b_1-a_1(p-1)&=&n(p-1)-[(n-1)(1-\frac{p}{2})+2](p-1)\nonumber\\
&=&(p-1)[(n-1)\frac{p}{2}-1]<2,~~\forall~p>1.
\end{eqnarray*}
It is easy to see that the solution set is $p\in(1,p_c(n))$.
Hence, by (\ref{E3-5}) and (\ref{E3-16}), we can apply Lemma 2.1 to get that $G(t)$ will blow up in finite time, then the solution to problem (\ref{E1-1R1}) will blow up in finite time.
At last, we estimate the lifespan result.
Since $G''(t)\geq0$ and $G'(0)\geq0$, $G(t)$ is an increasing smooth function.
So it holds
\begin{eqnarray*}
G(t)=\int_{\mathbb{R}^n}u(t,x)dx\geq\epsilon\int_{\mathbb{R}^n}f(x)dx\geq C\epsilon.
\end{eqnarray*}
Using (\ref{E2-ERR}) in Lemma 2.1, we have
\begin{eqnarray*}
T(\epsilon)&\leq&C_0\epsilon^{-\frac{(p-1)}{(p-1)a_1-b_1+2}}\\
&\leq& C_0\epsilon^{-\frac{p-1}{(p-1)[1-(n-1)\frac{p}{2}]+2}},
\end{eqnarray*}
where $2^{-\frac{(p-1)[1-(n-1)\frac{p}{2}]+2}{p-1}}\leq\epsilon\leq1$ and $C_0$ is a positive constant depending on $A$ but independent of $\epsilon$.
We complete the proof of Theorem 1.1.

\section{Proof of Theorem 1.2}\setcounter{equation}{0}
Define
\begin{eqnarray*}
G(t)=\int_{\mathbb{R}^n}u(t,x)dx.
%&&F_1(t)=\int_{\mathbb{R}^n}u(t,x)\psi_1(x).
\end{eqnarray*}
Integrating (\ref{E1-002}), we derive
\begin{eqnarray}\label{E4-1}
\frac{d^2}{dt^2}G(t)=\partial_{tt}\int_{\mathbb{R}^n}u(t,x)dx\geq e^{-\frac{n}{2}(p-1)Ht}\int_{\mathbb{R}^n}|\partial_tu|^p(t,x)dx,
\end{eqnarray}
and
\begin{eqnarray}\label{E4-2}
\frac{d^2}{dt^2}G(t)\geq e^{-\frac{n}{2}(p-1)Ht}\int_{\mathbb{R}^n}|\nabla u|^p(t,x)dx\geq e^{-\frac{n}{2}(p-1)Ht}\int_{\mathbb{R}^n}|u|^p(t,x)dx.
\end{eqnarray}
Then applying the H$\ddot{o}$lder inequality to the right hand side of (\ref{E4-1}) and using $\sup_{a,b>0}a^be^{-a}=(be^{-1})^b$, we get
\begin{eqnarray}\label{E4-3}
G''(t)&\geq&
e^{-\frac{n}{2}(p-1)Ht}\int_{\mathbb{R}^n}|\partial_tu|^p(t,x)dx\nonumber\\
&\geq& e^{-\frac{n}{2}(p-1)Ht}|\int_{\mathbb{R}^n}|\partial_tu(t,x)dx|^p(\int_{|x|\leq t+1}dx)^{1-p}\nonumber\\
&\geq& Ce^{-\frac{n}{2}(p-1)Ht}vol(\textbf{B}^n)(t+1)^{-n(p-1)}|\int_{\mathbb{R}^n}|\partial_tu(t,x)dx|^p\nonumber\\
&\geq&C'e^{-n(p-1)Ht}|\int_{\mathbb{R}^n}\partial_tu(t,x)dx|^p\nonumber\\
&=&C'e^{-n(p-1)Ht}|G'(t)|^p,
\end{eqnarray}
where $C'=Ce^{-1}(n^{-1}(p-1)^{-1}e)^{n(p-1)}vol(\textbf{B}^n)(\frac{nH(p-1)}{2})^{n(p-1)}$.

It follows from (\ref{E4-3}) that
\begin{eqnarray}\label{E4-4}
\frac{1}{1-p}\frac{d}{dt}|G'(t)|^{1-p}&\geq&C'e^{-nH(p-1)t}.
\end{eqnarray}
Integrating (\ref{E4-4}) over $[0,t]$,
\begin{eqnarray}\label{E4-5}
|G'(t)|&\geq&\frac{C'}{n}e^{nHt}.
\end{eqnarray}
It follows from (\ref{E4-2}) that $G'(t)=\int_{\mathbb{R}^n}\partial_tu(t,x)dx$ is an increasing function for $t\geq0$. Since $g(x)\geq0$ in (\ref{E1-3}), $G(t)$ is also an increasing function for $t\geq0$. By $f(x)\geq0$ in (\ref{E1-3}), we konw that $G(t)>0$. Thus it follows from (\ref{E4-5}) and $\sup_{a,b>0}a^be^{-a}=(be^{-1})^b$ that
\begin{eqnarray}\label{E4-6}
G(t)&\geq&\frac{C'}{n^2}e^{nHt}\geq\frac{C'}{n^2}(\frac{1}{2e})^{-\frac{Hn}{2}}t^{\frac{Hn}{2}}.
\end{eqnarray}
On the other hand, by the H$\ddot{o}$lder inequality, we derive
\begin{eqnarray*}
G(t)&=&\int_{\mathbb{R}^n}u(t,x)dx\nonumber\\
&\leq&(\int_{\mathbb{R}^n}|u(t,x)|^{p})^{\frac{1}{p}}(\int_{|x|\leq1+t}dx)^{1-\frac{1}{p}}\nonumber\\
&\leq&C(1+t)^{n(1-\frac{1}{p})}(\int_{\mathbb{R}^n}|u(t,x)|^{p})^{\frac{1}{p}},
\end{eqnarray*}
which implies that
\begin{eqnarray*}
C\int_{\mathbb{R}^n}|u(t,x)|^{p}dx\geq (1+t)^{-n(p-1)}G^{p}(t).
\end{eqnarray*}
Then by (\ref{E4-2}) and $\sup_{a,b>0}a^be^{-a}=(be^{-1})^b$, we obtain
\begin{eqnarray}\label{E4-7}
G''(t)&\geq&Ce^{-\frac{n}{2}(p-1)Ht} (1+t)^{-n(p-1)}G^{p}(t)\nonumber\\
&\geq&C(n(p-1)e^{-1})^{n(p-1)}e^{-1}e^{-(\frac{n}{2}(p-1)H+1)t}G^{p}(t).
\end{eqnarray}
Let
\begin{eqnarray*}
&&a_1=\frac{nH}{2},\\
&&b_1=\frac{n(p-1)H}{2}+1,\\
&&A=\frac{C'}{n^2}(\frac{1}{2e})^{-\frac{Hn}{2}}.
\end{eqnarray*}
Then direct computation shows that
\begin{eqnarray*}
b_1-a_1(p-1)=\frac{n(p-1)H}{2}+1-\frac{nH(p-1)}{2}=1<2,
\end{eqnarray*}
where
\begin{eqnarray*}
1<p<1+\frac{2}{n-1}.
\end{eqnarray*}
It is easy to see that the solution set is $p\in(1,p'_c(n))$.
Hence, by (\ref{E4-6})-(\ref{E4-7}), we can apply Lemma 2.1 to get that $G(t)$ will blow up in finite time, then the solutions to problem (\ref{E1-002}) will blow up in finite time.
Using (\ref{E2-ERR}) in Lemma 2.1, we obtain
\begin{eqnarray*}
T(\epsilon)&\leq& C_0\epsilon^{-\frac{(p-1)}{(p-1)a_1-b_1+2}},\\
&\leq& C_0\epsilon^{-(p-1)},
\end{eqnarray*}
where $2^{-\frac{1}{p-1}}\leq\epsilon\leq1$ and $C_0$ is a positive constant depending on $A$ but independent of $\epsilon$. This completes the proof of Theorem 1.2.

\section*{Acknowledgements}
The author expresses his sincere thanks to the anonymous
referees for very careful reading and for providing many valuable
comments and suggestions which led to improvement of this paper.
The author expresses his sincerely thanks to Prof. Y. Zhou for his discussion and suggestion. The author also expresses his sincerely thanks to Prof. K. Yagdjian for his interest on this paper, his suggestion and sending us the interesting papers of \cite{Yag5,Yag3,Yag4}. The author is also expresses his sincerely thanks to Dr. N.A. Lai for his suggestion.
This work is supported by the Fundamental Research Funds for the Central Universities of Xiamen University, No. 20720150013.

\end{document}